\documentclass[12pt]{article}
\usepackage{amsmath,amssymb,amsfonts,amsthm}
\usepackage[margin=1in]{geometry}
\usepackage{graphicx,subcaption}

\theoremstyle{plain}

\newtheorem{theorem}{Theorem}[section]
\newtheorem{lemma}[theorem]{Lemma}
\newtheorem{proposition}[theorem]{Proposition}

\title{Face-simple minimal quadrangulations of surfaces}
\author{Sarah Abusaif, Warren Singh, and Timothy Sun\\Department of Computer Science\\San Francisco State University}
\date{}

\begin{document}

\maketitle

\begin{abstract}
For each surface besides the sphere, projective plane, and Klein bottle, we construct a face-simple minimal quadrangulation, i.e., a simple quadrangulation on the fewest number of vertices possible, whose dual is also a simple graph. Our result answers a question of Liu, Ellingham, and Ye while providing a simpler proof of their main result. The inductive construction is based on an earlier idea for finding near-quadrangular embeddings of the complete graphs using the diamond sum operation. 
\end{abstract}

\section{Introduction}
A \emph{quadrangular} embedding of a graph is a cellular embedding where every face has length 4. A \emph{minimal quadrangulation} of a surface is a quadrangular embedding of a simple graph which has the smallest number of vertices out of all simple graphs that quadrangulate that surface. The problem of finding a minimal quadrangulation for each closed surface was proposed by Hartsfield and Ringel \cite{Hartsfield-Nonorientable, Hartsfield-Orientable}, who solved some special cases using quadrangular embeddings of complete graphs and octahedral graphs.

Let $\phi\colon G \to S$ be a cellular embedding of a graph $G = (V,E)$ in a surface $S$ of Euler characteristic $\chi(S)$, and let $F$ denote the set of faces of $\phi$. The Euler polyhedral formula states that 
$$|V| - |E| + |F| = \chi(S).$$ 
If $\phi$ is quadrangular, then substituting $4|F| = 2|E|$ results in the constraint
$$|E| = 2|V|-2\chi(S).$$ 
From this equation, a necessary condition for a graph $G = (V,E)$ to have a quadrangular embedding is that $|E| \equiv 2|V| \pmod{2}$ for nonorientable surfaces, and $|E| \equiv 2|V| \pmod{4}$ for orientable surfaces. 

Hartsfield and Ringel's problem can be rephrased in terms of finding quadrangular embeddings of graphs on a certain number of vertices and edges. We say that an embedding is an \emph{$(n,t)$-quadrangulation} if it is a quadrangular embedding of a simple graph on $n$ vertices and $\binom{n}{2}-t$ edges. By the above necessary condition, we find that $t$ must be congruent to $\frac{1}{2}n(n-5)$ modulo 2 for nonorientable quadrangular embeddings, and modulo 4 for orientable quadrangular embeddings. Liu, Ellingham, and Ye \cite{Liu-Minimal} proved that if $t \leq n-4$, an $(n,t)$-quadrangulation is necessarily minimal. They went on to find such a quadrangulation for every surface except for the sphere and the Klein bottle:

\begin{theorem}[Liu \emph{et al.}\ \cite{Liu-Minimal}]
For all integers $n$ and $t$ such that $n \geq 5$, $0 \leq t \leq n-4$, and $t \equiv \frac{1}{2}n(n-5) \pmod{4}$, there exists an orientable $(n, t)$-quadrangulation. There is also an orientable $(4,2)$-quadrangulation.
\label{thm-orient}
\end{theorem}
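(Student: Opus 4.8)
The plan is to prove the theorem by induction on $n$, producing the whole family of $(n,t)$-quadrangulations simultaneously rather than one surface at a time. The engine is the \emph{diamond sum}: given quadrangular embeddings $\phi_1\colon G_1\to S_1$ and $\phi_2\colon G_2\to S_2$ together with vertices $v_1\in G_1$ and $v_2\in G_2$ of a common degree $d$, one deletes $v_1$ and $v_2$, so that the $d$ quadrilaterals around each merge into a single $2d$-gon, and then glues the two $2d$-gonal holes, identifying their cyclic lists of boundary vertices. A short Euler-characteristic check shows that the result is again quadrangular, that it lives on the connected sum $S_1\# S_2$, and that it is orientable whenever $S_1$ and $S_2$ are and the gluing reverses orientation. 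Its parameters are governed by $|V|=|V_1|+|V_2|-2-2d$, $|E|=|E_1|+|E_2|-4d$, and $\chi=\chi_1+\chi_2-2$, so the defect $t=\binom{|V|}{2}-|E|$ is an explicit function of the two summands and of $d$.

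First I would assemble the base cases, i.e.\ the extreme instances that cannot be decomposed. The embedding of $C_4$ on the sphere is the required orientable $(4,2)$-quadrangulation and also seeds the smallest cases. For the densest quadrangulations---those realizing the least admissible value $t_0$ of $t$ in its class modulo $4$---I would invoke the near-quadrangular embeddings of the complete graphs themselves: for $n$ in the residue classes with $\tfrac12 n(n-5)\equiv 0\pmod 4$ these are honest quadrangulations of $K_n$, and for the remaining residues they are quadrangulations of $K_n$ with a small, controlled set of missing edges. Together with a finite list of explicit small embeddings, these provide, for every admissible $n$, a simple orientable quadrangulation attaining the minimal $t$.

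Next I would show how to sweep through the progression $t=t_0,t_0+4,\dots,n-4$. Since increasing $t$ by $4$ is exactly a decrease of genus by one with $n$ held fixed, the inductive step is to diamond-sum the current quadrangulation with a small, fixed, near-complete gadget at a vertex of the appropriate degree $d$, chosen so that the new graph still has $n$ vertices, has exactly four fewer edges, and remains near-complete. Concretely one selects the merge degree and the gadget so that $|V|$ is preserved while $\chi$ drops by $2$; iterating and tracking $|V|$, $|E|$, and the congruence then yields every pair $(n,t)$ with $n\ge 5$ and $0\le t\le n-4$ in the correct class modulo $4$. The boundary values $t=0$ and $t=n-4$, and the transition between residues of $n$, I would confirm by direct bookkeeping on the parameter formulas above.

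The main obstacle throughout is \textbf{simplicity}, together with keeping the complement small. Each diamond sum unions two cyclic neighborhoods along the identified boundary, and the same identification that we rely on to preserve near-completeness is exactly what can create loops or parallel edges. The bulk of the argument is therefore the careful design of the gadgets and the careful choice of the merge vertices $v_1,v_2$ so that the identified vertices acquire disjoint new neighbors, so that the number of missing edges stays within the target bound $n-4$, and so that no repeated edge is ever introduced; verifying this, along with the small-$n$ base cases, is where essentially all of the difficulty lies. Orientability, by contrast, is automatic once every gluing is performed orientation-reversingly.
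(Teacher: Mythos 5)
There is a genuine gap, and it sits at the heart of your inductive step. You propose to fix $n$ and ``sweep'' $t$ upward in steps of $4$ by diamond-summing with a small gadget. This is impossible for \emph{any} gadget. A diamond sum takes the connected sum of the two surfaces, so $\chi = \chi_1 + \chi_2 - 2 \le \chi_1$, and quadrangularity forces $|E| = 2|V| - 2\chi$ on both the input and the output. Hence if $|V|$ is preserved, then $|E| \ge |E_1|$: a diamond sum can never remove edges at fixed vertex count, so $t$ can never increase this way. Your own bookkeeping shows the same thing: demanding $|V_2|-2-d=0$ and $|E_2|=2d-4$ together with $|E_2| = 2|V_2|-2\chi_2$ forces $\chi_2 = 4$, and no surface has Euler characteristic $4$. (Your text is also internally inconsistent on this point: with $|V|$ fixed, ``$\chi$ drops by $2$'' means four \emph{more} edges, not four fewer.) Two secondary issues: your parameter formulas are wrong for the actual diamond sum, which identifies only the $d$ neighbors of the deleted vertices, not the full $2d$-gon boundary, giving $|V| = |V_1|+|V_2|-2-d$ and $|E| = |E_1|+|E_2|-2d$; and seeding the densest cases with near-quadrangular embeddings of $K_n$ assumes a large part of what is to be proved.

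The paper's induction runs in the direction the diamond sum actually permits: it increases $n$. Lemma \ref{lem-orientinduct} passes from an $(n,t)$-quadrangulation with a dominating vertex to $(n+8, t+i)$-quadrangulations for each $i \in \{0,4,8\}$, by diamond-summing the gadgets $\phi_{11,i}^{+*}$ through an orientable quadrangular embedding of $K_{10,n-1}$ and then into $\psi$ at the dominating vertex; the flexibility in $t$ comes from the \emph{choice of gadget} $\phi_{11,i}^{+*}$, not from modifying a fixed-$n$ embedding, and the base cases $5 \le n \le 14$ are handled explicitly (using $\phi_{5,0}^*$, $\phi_{7,2}^{+*}$, $\phi_{11,0}^{+*}$, $\phi_{11,4}^{+*}$, $\phi_{10,1}^*$, Proposition \ref{prop-intermediate}, and the handle augmentation). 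The one operation in the paper that does change the edge count at fixed $n$ is that handle augmentation of Figure \ref{fig-handle}, which \emph{adds} a handle and the four edges of a $4$-cycle --- i.e., it moves $t$ in the opposite direction from your sweep, consistently with the Euler constraint --- and it is used only for a couple of base cases, never as the inductive engine. To repair your argument you would have to reverse the roles: either induct on $n$ as the paper does, or start from the sparsest embeddings $t = n-4$ and add edges via handle augmentations, which then leaves you needing to construct all the $t = n-4$ base cases by some other means.
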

\begin{theorem}[Liu \emph{et al.}\ \cite{Liu-Minimal}]
For all integers $n$ and $t$ such that $n \geq 4$, $n \neq 5$, $0 \leq t \leq n-4$, and $t \equiv \frac{1}{2}n(n-5) \pmod{2}$, there exists a nonorientable $(n, t)$-quadrangulation. There is also a nonorientable $(6,3)$-quadrangulation.
\label{thm-nonorient}
\end{theorem}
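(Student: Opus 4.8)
The plan is to build every required $(n,t)$-quadrangulation inductively from smaller ones by means of the diamond sum, so I first record what this operation does to the relevant parameters. Given two quadrangular embeddings, I choose a vertex $v_1$ of degree $d$ in the first and a vertex $v_2$ of degree $d$ in the second; because every face is a quadrilateral, the boundary of the disk obtained by deleting $v_i$ together with its $d$ incident faces is a cycle of length $2d$ that alternates between the neighbors of $v_i$ and the opposite corners of the faces around $v_i$. Deleting $v_1$ and $v_2$ and identifying the two boundary cycles produces a new cellular embedding in which every face is again a quadrilateral, with
\[
n = n_1 + n_2 - 2 - 2d, \qquad \chi = \chi_1 + \chi_2 - 2 .
\]
Since the result is quadrangular, $|E| = 2n - 2\chi$, so $t = \binom{n}{2} - 2n + 2\chi$ is determined entirely by $n$ and $\chi$. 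Two features make this useful: the deficiency $t$ is controlled purely through these two quantities, and if either summand lies in a nonorientable surface then so does the result.

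From here I would isolate the move that sweeps a single column of the parameter region. Taking the second summand to be a fixed quadrangulation of the projective plane on $2d+2$ vertices with a vertex of degree $d$ (the smallest case being a six-vertex example with $d=2$) leaves $n$ unchanged, lowers $\chi$ by one, and hence \emph{decreases $t$ by exactly $2$} while forcing nonorientability. Thus, fixing $n$ and starting from a quadrangulation at the top of its column, where $t = n-4$ (or $t = n-5$ when parity requires), repeated application of this move realizes every valid pair $t = n-4,\, n-6,\, \dots,\, t_{\min}$ with $t_{\min}\in\{0,1\}$; note that parity is never an issue, as each diamond sum automatically outputs an admissible quadrangulation. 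The top-of-column anchors themselves, where the graph is sparsest, I would obtain as diamond sums of quadrangular embeddings of complete graphs of smaller order, enlarged when necessary by diamond-summing with a quadrangulation of the sphere to reach the desired number of vertices; this is precisely the earlier near-quadrangular construction referenced in the abstract. The base of the induction consists of small quadrangulations built by hand, in particular $K_4$ in the projective plane as the fundamental nonorientable block, together with the anomalous pair $(6,3)$, which lives on the Klein bottle ($\chi=0$) and is produced directly as a $6$-vertex, $12$-edge quadrangulation. Since Theorem~\ref{thm-orient} is already available, orientable $(n,t)$-quadrangulations can also be pressed into service as anchors, reducing the number of base cases constructed from scratch.

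The main obstacle is not the Euler-characteristic bookkeeping, which is forced, but guaranteeing that each diamond sum yields a \emph{simple} quadrangulation. The identification of the two $2d$-cycles can create parallel edges or loops, or collapse a quadrilateral, unless the bijection between the boundary cycles is chosen carefully, matching the neighbor vertices of one hole to the opposite corners of the other and checking that no two identified vertices were already adjacent. I therefore expect the technical heart of the argument to be a precise description of admissible gluings for the chosen gadgets, together with a verification that a vertex of the correct degree $d$ survives each step so that the induction can continue. A secondary point is combining the moves so as to cover the pairs near the boundary $t=n-4$ and the smallest orders $n=4$ and $n\geq 6$, where there is least room to maneuver; these ranges I would dispatch by an explicit list of base cases rather than through the inductive step.
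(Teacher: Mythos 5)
Your instinct---build everything by diamond sums and treat graph simplicity as the technical heart---matches the paper, but the specific decomposition you propose (first reach the right $n$, then sweep $t$ downward at fixed $n$ by repeatedly summing with a projective-plane gadget) breaks down, and the obstruction is precisely the simplicity issue you postpone. A bookkeeping point first: in the diamond sum as used here only the $d$ neighbours of $v$ lie on the boundary of the excised disk, so the vertex count of the sum is $n_1+n_2-2-d$, not $n_1+n_2-2-2d$; an ``$n$-preserving'' gadget therefore needs $d+2$ vertices, hence $2d+2$ edges, with a vertex $v'$ of degree $d$. Now, for the glued graph to be simple, for each pair of identified boundary vertices at most one of the two sides may already join them by an edge; the operative sufficient condition (the one Lemma~\ref{lem-technical} imposes) is that one side has an independent neighbourhood at the summed vertex. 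Neither side can supply this. In any $(n,t)$-quadrangulation with $t\le n-4$, a vertex of degree $d$ accounts for $n-1-d$ missing edges and an independent neighbourhood would account for $\binom{d}{2}$ more, forcing $\binom{d}{2}\le d-3$, which has no solution; in particular the minimum degree is at least $3$, so your ``smallest case'' degree-$2$ gadget can never be attached to a minimal quadrangulation at all. The gadget cannot supply independence either: its $d+2$ edges avoiding $v'$ would all have to meet the unique non-neighbour of $v'$, which has only $d$ available neighbours. Even the weaker pairwise condition (steering the gadget's neighbourhood edges onto non-edges of the host) is unverified and gives no control over whether a usable vertex survives for the next iteration. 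The same density problem defeats your plan for the top-of-column anchors: these graphs are nearly complete ($K_n$ minus at most $n-4$ edges), and padding with sphere summands attaches sparse bipartite pieces that push $t$ far above $n-4$.

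The paper avoids all of this by routing every diamond sum through deliberately \emph{non-minimal} intermediate embeddings. The gadgets $\phi_{7,i}^{+}$ have a subdivided edge (a degree-$2$ vertex, so they are not themselves minimal quadrangulations) and an apex $x$ of degree $6$ whose neighbourhood is independent; they are first summed with an orientable quadrangular embedding of $K_{6,n-1}$, whose sides are independent by construction, and only then summed onto the inductive $(n-4,t-i)$-quadrangulation at a vertex adjacent to every other vertex. One such step raises $n$ by $4$ and $t$ by $i\in\{0,2,4\}$ simultaneously (Lemma~\ref{lem-induct}), so the induction moves diagonally through the $(n,t)$ table rather than down a column, with a handful of explicit base cases for $n\le 7$. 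To repair your argument you would need to replace the fixed-$n$, $t$-decreasing move with a step of this diagonal type, in which the low-degree and independent-neighbourhood vertices live in auxiliary scaffolding rather than in the minimal quadrangulations themselves.
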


An embedding is said to be \emph{face-simple} if its dual is simple, i.e., no two faces share more than one edge, and no edge has two incidences with the same face. For quadrangular embeddings, there are only a few ways that two faces can meet at two different edges, yielding a fairly general sufficient condition:
\begin{proposition}[Liu \emph{et al.}\ \cite{Liu-Minimal}, Observation 2.3]
Every orientable quadrangular embedding of minimum degree at least 3 is face-simple.
\label{prop-deg3}
\end{proposition}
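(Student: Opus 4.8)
The plan is to argue by contradiction: assume the embedding is not face-simple, and produce a vertex of degree at most $2$, contradicting the hypothesis. Since the embeddings of interest are of simple graphs, I may assume the underlying graph $G$ has no loops or multiple edges. First I would set up the orientable face-tracing machinery: fixing a global orientation, each edge carries two oppositely directed darts, and every dart lies in exactly one facial walk. The key consequence of orientability, which I would isolate as the crux of the whole argument, is that if an edge $e$ borders a face $f$ on both sides then both darts of $e$ lie in the single walk of $f$, so $e$ is traversed once in each direction, whereas if $e$ separates two distinct faces $f_1, f_2$ then its two darts are split between the two walks and are traversed in opposite directions. On a nonorientable surface this coherence fails, which is exactly why the statement needs orientability.

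A failure of face-simplicity is of one of two types: a \emph{dual loop}, where some edge $e$ has the same face $f$ on both sides, or a \emph{dual multi-edge}, where two distinct faces $f_1, f_2$ share two distinct edges $e, e'$. I would treat these in turn. In the dual-loop case, the length-$4$ walk of $f$ contains both darts of $e = uv$ in opposite directions. If these two darts are consecutive in the walk, the walk turns around at a common endpoint using the same edge, so in the rotation system that endpoint has its neighbor as its own rotational successor, forcing degree $1$. If the two darts occupy the nonadjacent positions of the four, then the remaining two darts are forced to be loops at $u$ and $v$, which a simple graph cannot contain. Either branch contradicts the hypotheses.

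For the dual-multi-edge case, I would glue the two quadrilaterals along $e = uv$; orientability makes this an orientation-compatible gluing, so $f_1$ reads $u \to v \to a \to b \to u$ and $f_2$ reads $v \to u \to c \to d \to v$. The second shared edge $e'$ is then either \emph{incident} to $e$ or \emph{opposite} to $e$. If $e'$ shares a vertex with $e$, say $v$, then $f_1$ and $f_2$ occupy the two corners flanking the consecutive pair $e, e'$ at $v$; for both corners to exist, $e$ and $e'$ must be cyclically consecutive on both sides at $v$, which forces $\deg(v) = 2$. If instead $e'$ is opposite to $e$ in each quadrilateral, then matching the darts, again via orientability, forces the remaining two edges of $f_1$ and $f_2$ to coincide as well, so that $f_1$ and $f_2$ share all four edges of a $4$-cycle $uvab$ bounding two disks; every vertex of this cycle then has degree $2$. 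Both outcomes contradict minimum degree at least $3$.

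The main obstacle, and the place where I would spend the most care, is making the case enumeration genuinely exhaustive while correctly invoking orientability to discard the \emph{twisted} gluings (same-direction traversals of a shared edge) that a nonorientable surface would permit; those twisted configurations are precisely the ones that can share two edges without creating a low-degree vertex. The degenerate coincidences that must be checked along the way, such as a purported second shared edge that is really $e$ again, or a vertex identification collapsing a quadrilateral, are routine, but one must verify that every branch terminates either in a loop or multiple edge, excluded by simplicity, or in a vertex of degree at most $2$, excluded by hypothesis.
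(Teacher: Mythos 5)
The paper does not actually prove this proposition---it is quoted from Liu \emph{et al.}\ (their Observation 2.3) with only the remark that ``there are only a few ways that two faces can meet at two different edges.'' Your case analysis is a correct fleshing-out of exactly that remark: the dual-loop and dual-multi-edge cases, split by whether the shared edges are adjacent or opposite in the quadrilaterals, each terminate in a loop, a multiple edge, or a vertex of degree at most $2$, and you invoke orientability in precisely the right place (to force the two darts of a shared edge to be traversed in opposite directions, which is what fails on the Klein bottle).
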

Consequently, Theorem \ref{thm-orient} can automatically be strengthened to include face-simplicity for each orientable minimal quadrangulation, except $(n,t) = (4,2)$. On the sphere, Hartsfield and Ringel \cite{Hartsfield-Orientable} observed that its smallest face-simple quadrangulation is the cube graph on $8$ vertices, since its dual, the octahedral graph, is the smallest 4-regular simple planar graph. 

Liu \emph{et al.}\ \cite{Liu-Minimal} conjectured that a similar sharpening is possible for nonorientable surfaces. Hartsfield and Ringel \cite{Hartsfield-Nonorientable} showed that the smallest face-simple quadrangulation of the projective plane has 6 vertices, and their minimal $(6,3)$-quadrangulation in the Klein bottle is face-simple. The purpose of this note is to prove that the conjecture of Liu \emph{et al.} is true for all other nonorientable surfaces. We also simplify the proof of Theorem \ref{thm-orient} using the same method.

In earlier work, Liu \emph{et al.}\ \cite{EvenMapColor} (a superset of the authors of \cite{Liu-Minimal}) found an inductive construction for minimum genus even embeddings (i.e., every face has even length) of the complete graphs $K_n$ using a clever application of Bouchet's \cite{Bouchet-Diamond} diamond sum operation. When the embeddings are quadrangular, these are $(n,0)$-quadrangulations. Liu \emph{et al.}\ \cite{Liu-Minimal} contrast this construction with their proof of Theorems \ref{thm-orient} and \ref{thm-nonorient} using what they call Hartsfield's ``diagonal technique.'' However, they did not comment on the feasibility of a diamond sum approach, like the one we take here, to constructing other, non-complete minimal quadrangulations.

\section{The diamond sum operation}

Let $\psi\colon G \to S$ and $\psi'\colon G' \to S'$ be two quadrangular embeddings in disjoint surfaces $S$ and $S'$, and let $v$ and $v'$ be vertices of $G$ and $G'$, respectively, that have the same degree. Following Figure \ref{fig-diamond}(a), let $D$ be a closed disk in the surface $S$ intersecting the embedded graph only at $v$ and its incident edges, such that the neighbors of $v$ are on the boundary of $D$. Let $D'$ be defined similarly for $S'$ and $v'$. Delete the interiors of $D$ and $D'$ and identify the resulting boundaries, like in Figure \ref{fig-diamond}(b), so that each vertex on the boundary of $D$ is identified with a unique vertex on the boundary of $D'$. We call the resulting quadrangular embedding a \emph{diamond sum of $\psi$ and $\psi'$ at $v$ and $v'$}. Since the diamond sum operation takes the connected sum of the surfaces $S$ and $S'$, the resulting embedding is orientable if and only if both $\psi$ and $\psi'$ are orientable. 

\begin{figure}[t]
\centering
    \begin{subfigure}[b]{0.49\textwidth}
    \centering
        \includegraphics[scale=1]{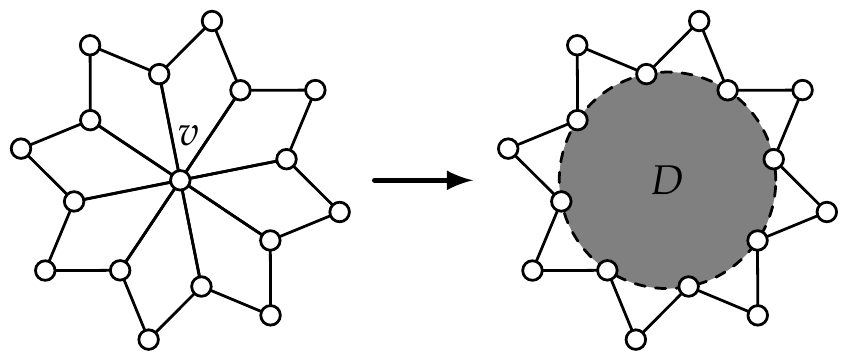}
        \caption{}
        \label{subfig-o34}
    \end{subfigure}
    \begin{subfigure}[b]{0.49\textwidth}
    \centering
        \includegraphics[scale=1]{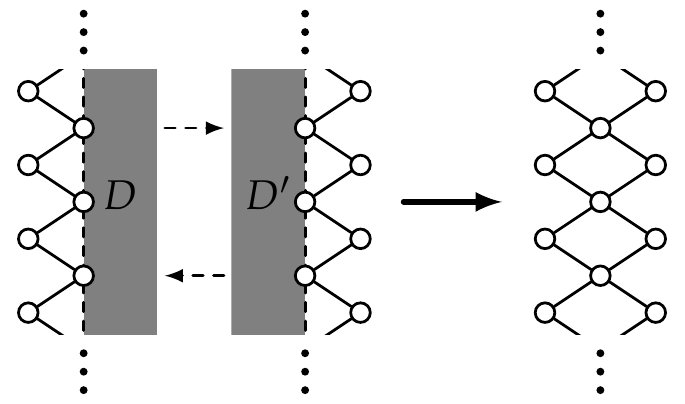}
        \caption{}
        \label{subfig-o40}
    \end{subfigure}
\caption{Excising disks (a) and merging their boundaries (b).}
\label{fig-diamond}
\end{figure}

The aforementioned proof by induction for the complete graphs uses a quadrangular embedding of $K_7$ with a subdivided edge (e.g., the embedding $\phi_{7,0}^+$ in Figure \ref{fig-embeddings}).  Any embedding of a graph with a vertex of degree 2, like $\phi_{7,0}^+$, cannot be face-simple because its two incident faces share two edges. However, Liu \emph{et al.}\ \cite{EvenMapColor} showed that applying a diamond sum near this vertex can remove the double incidence. We call an embedding \emph{nearly face-simple except at vertex $v$} if every pair of distinct faces shares at most one edge not incident with $v$. Note that face-simple embeddings trivially satisfy this property for any of its vertices. The following technical lemma guarantees face-simplicity, which we rephrase slightly for quadrangular embeddings:

\begin{lemma}[Liu \emph{et al.}\ \cite{EvenMapColor}, Lemma 3.5]
Suppose $\psi$ is a face-simple quadrangular embedding of a simple graph $G$ of minimum degree at least 3, and let $v$ be a vertex of $G$ whose neighbors are independent. Suppose $\psi'$ is a quadrangular embedding of a simple graph that is nearly face-simple except at some vertex $v'$. If the degrees of $v$ and $v'$ are the same, then any diamond sum of $\psi$ and $\psi'$ at $v$ and $v'$ is quadrangular and face-simple.
\label{lem-technical}
\end{lemma}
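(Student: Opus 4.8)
The plan is to analyze the diamond sum locally and then verify face-simplicity by a case check over all pairs of faces. First I would fix notation for the local picture. Write $d = \deg v = \deg v'$ and let $u_1, \dots, u_d$ be the neighbors of $v$ in the cyclic order given by the rotation at $v$; since $\psi$ is quadrangular, the $d$ faces incident with $v$ are quadrilaterals $F_i = (v, u_i, w_i, u_{i+1})$ (indices mod $d$), where $w_i$ is the vertex opposite $v$. I would define $u'_i, w'_i, F'_i$ analogously at $v'$ and relabel so that the boundary identification matches $u_i$ with $u'_i$. The diamond sum then deletes $v$ and $v'$ together with their incident edges, identifies $u_i$ with $u'_i$, and glues the two length-$2$ boundary paths $u_i w_i u_{i+1}$ and $u_i w'_i u_{i+1}$ into a single quadrilateral $Q_i = (u_i, w_i, u_{i+1}, w'_i)$. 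Every other face is inherited unchanged from $\psi$ or from $\psi'$, so the result is quadrangular. I would record that the edge set splits into two disjoint classes, the $G$-edges (surviving from $\psi$) and the $G'$-edges (from $\psi'$), which meet only at the identified vertices $u_i$.

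Next I would extract the consequences of the hypotheses on $\psi$. Independence of the neighbors of $v$ forces $w_i \notin \{u_1, \dots, u_d\}$, since otherwise $u_i w_i$ would join two neighbors of $v$; and face-simplicity of $\psi$ forces $w_i \neq w_{i+1}$, since otherwise $F_i$ and $F_{i+1}$ would share both $v u_{i+1}$ and $w_i u_{i+1}$. These two facts guarantee that each $Q_i$ is a genuine $4$-cycle on four distinct vertices and that the diamond sum remains simple: a multi-edge could only appear between two identified boundary vertices, but $G$ has no edge among the $u_i$, so no $G$-edge is ever doubled by a $G'$-edge.

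For face-simplicity I would sort the faces into three types---the old $\psi$-faces ($A$), the old $\psi'$-faces ($B$), and the new faces $Q_i$---and bound the number of edges shared by each pair (as well as rule out an edge incident twice with one face). The pairs $A$--$A$ are controlled by face-simplicity of $\psi$; the pairs $B$--$B$ by near-face-simplicity of $\psi'$, because two faces avoiding $v'$ can only share edges not incident with $v'$; and $A$--$B$ pairs share nothing, since their edge classes are disjoint. For an $A$--$Q_i$ pair the only candidate shared edges are the two $G$-edges $u_i w_i, w_i u_{i+1}$ of $F_i$; if an $A$-face shared both, it would be the second face along each of them, forcing $F_i$ to share two edges with it and contradicting face-simplicity of $\psi$. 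The symmetric statement for $B$--$Q_i$ follows from near-face-simplicity, since the relevant $G'$-edges of $Q_i$ are non-$v'$ edges of $F'_i$.

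The crux is the $Q_i$--$Q_j$ case, and this is where I expect the real work to be. I would first show that distinct $Q_i, Q_j$ never share a $G$-edge: such an edge would be a common non-$v$ edge of $F_i$ and $F_j$, necessarily incident with both $w_i$ and $w_j$. The case $w_i \neq w_j$ is impossible, because then the edge would be $w_i w_j$ with $w_j \in \{u_i, u_{i+1}\}$, violating $w_j \notin \{u_1,\dots,u_d\}$; and the case $w_i = w_j$ forces $i, j$ to be cyclically adjacent with $w_i = w_{i\pm 1}$, which I have already excluded. Hence every edge that two $Q$-faces can share is a $G'$-edge, and near-face-simplicity of $\psi'$ caps the number of shared non-$v'$ edges of $F'_i$ and $F'_j$ at one. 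This asymmetric use of the hypotheses is the heart of the argument: the strong assumptions on $\psi$ (face-simple with an independent neighborhood at $v$) are exactly what is needed to eliminate all $G$-side double incidences, so that the much weaker assumption on $\psi'$---failure of face-simplicity only at $v'$---leaves at most one shared edge. The remaining self-incidence checks reduce to the same case analysis, since the two faces on either side of any surviving edge are distinct by the same reasoning.
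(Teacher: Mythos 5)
The paper does not actually prove this lemma; it is imported verbatim from Liu \emph{et al.}\ \cite{EvenMapColor} (their Lemma 3.5), so there is no in-paper argument to compare against. Your blind proof is a correct, self-contained reconstruction of what that cited proof must do: set up the local picture $F_i = (v, u_i, w_i, u_{i+1})$, observe that the diamond sum merges $F_i$ and $F_i'$ into $Q_i = (u_i, w_i, u_{i+1}, w_i')$, and run the case analysis over pairs of faces. The two structural facts you isolate --- $w_i \notin \{u_1,\dots,u_d\}$ from independence of the neighbors of $v$, and $w_i \neq w_{i+1}$ from face-simplicity of $\psi$ --- are exactly the levers needed to eliminate every potential double incidence on the $G$-side, so that only $G'$-edges can be shared by two new faces and near-face-simplicity of $\psi'$ finishes; your identification of the $Q_i$--$Q_j$ case as the crux, and of the deliberately asymmetric division of labor between the hypotheses on $\psi$ and $\psi'$, is the right reading of why the lemma is stated this way. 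One small caveat: your closing sentence conflates ``an edge lies on two distinct faces'' with ``an edge appears twice on the boundary walk of a single face.'' For the new faces $Q_i$ the latter is excluded by your four-distinct-vertices argument, and for inherited $\psi$-faces by face-simplicity of $\psi$; but for inherited $\psi'$-faces it must come from the definition of nearly face-simple, which as restated in this paper constrains only \emph{pairs} of distinct faces. That is a defect of the restated definition rather than of your argument (and is harmless for every $\psi'$ used in this paper), but it deserves an explicit sentence rather than an appeal to ``the same reasoning.''
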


\section{Nonorientable surfaces}

Ringel \cite{Ringel-Orientable} showed that $K_{m,n}$ has an orientable quadrangular embedding if $m \equiv 2 \pmod{4}$, for all $n \geq 2$. When $\min(m,n) \geq 3$, these embeddings are guaranteed to be face-simple by Proposition \ref{prop-deg3}. The other quadrangular embeddings we use are shown in Figure \ref{fig-embeddings} as polygons whose sides are identified. Each embedding $\phi_{n,t}$ is of a graph homeomorphic to a simple graph on $n$ vertices and $\binom{n}{2}-t$ edges, and is either face-simple, or nearly face-simple except at vertex $x$. The ``$+$'' superscript indicates that one edge has been subdivided, and the ``$*$'' superscript indicates that the embedding is orientable. One can check that the other embeddings are nonorientable by finding two sides of the polygon whose identification creates a M\"obius strip.

The main building block in our inductive construction generalizes Lemma 4.2 of Liu \emph{et al.}\ \cite{EvenMapColor}, and it has a very similar proof:

\begin{figure}[t]
\centering
\includegraphics[scale=1]{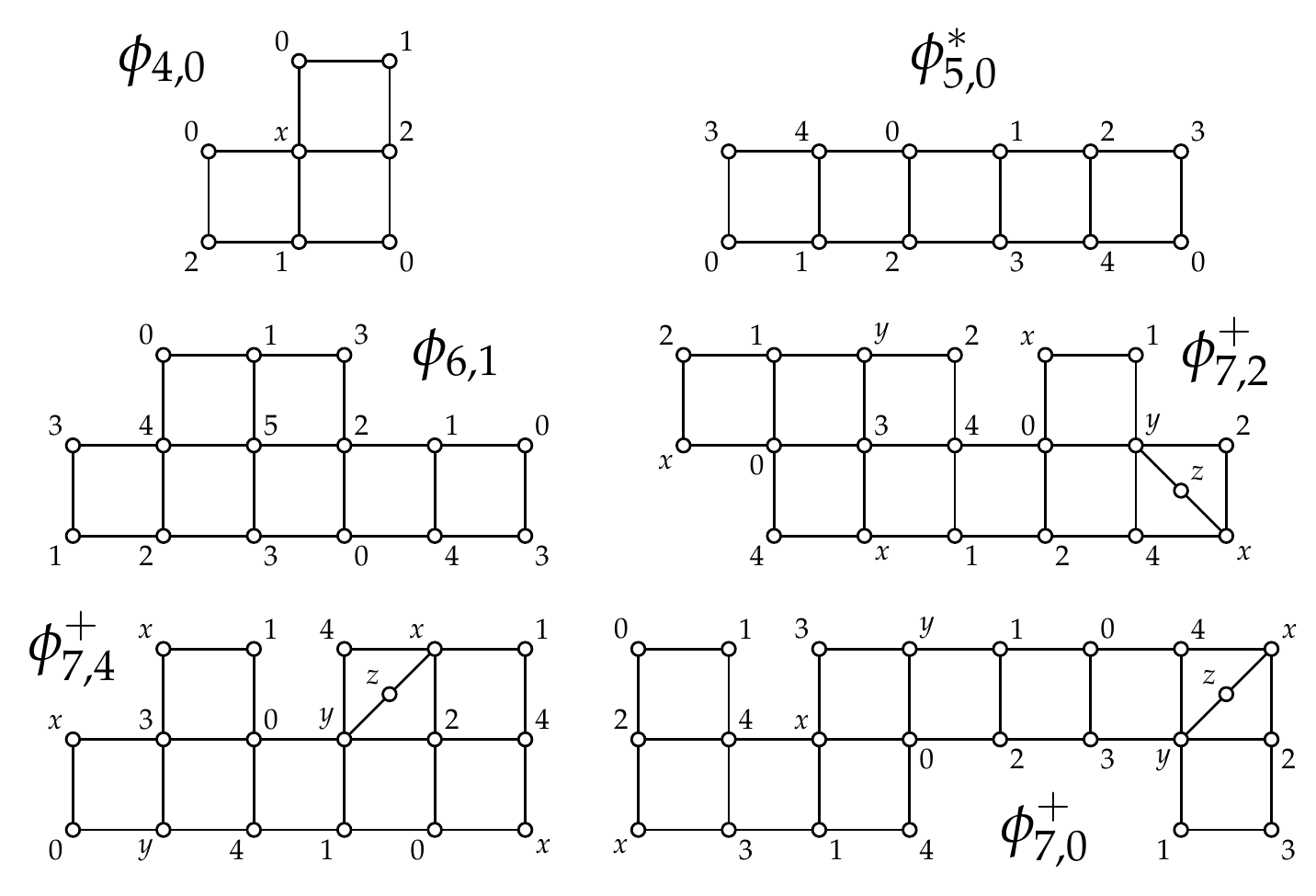}
\caption{``Unfoldings'' of quadrangular embeddings.}
\label{fig-embeddings}
\end{figure}

\begin{lemma}
Suppose, for some $n \geq 4$, there exists an $(n,t)$-quadrangulation $\psi$ of a simple graph $G$ that is nearly face-simple except at $v$, where $v$ is some vertex adjacent to every other vertex. Then, for each $i \in \{0, 2, 4\}$, there exists a face-simple nonorientable $(n+4, t+i)$-quadrangulation which has a vertex adjacent to every other vertex.
\label{lem-induct}
\end{lemma}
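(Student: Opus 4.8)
The plan is to realize the required embedding as a composition of two diamond sums: first I build an auxiliary face-simple embedding $\Phi_i$ that carries the nonorientability and the correct edge deficiency, and then I diamond-sum $\Phi_i$ with the given embedding $\psi$. Since $v$ is adjacent to every other vertex, $\deg v = n-1$, so $\psi$ is nearly face-simple except at a vertex of degree $n-1$; all of the degrees matched in the sums below are read off from this. For the induction to work, the final embedding must again possess a vertex adjacent to every other vertex, so I will keep track of one distinguished vertex throughout.

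First I would construct, for each $i \in \{0,2,4\}$, an embedding $\Phi_i$ on $n+5$ vertices that is face-simple, nonorientable, and quadrangular, has minimum degree at least $3$, and contains a vertex $u$ of degree $n-1$ whose neighbors are independent, together with a dominating vertex. I would take the orientable quadrangulation of $K_{6,n-1}$ from Ringel's construction \cite{Ringel-Orientable}, which is face-simple for $n \geq 4$ by Proposition \ref{prop-deg3} since $\min(6,n-1) \geq 3$, and diamond-sum it at a degree-$6$ vertex $w$ of the size-$(n-1)$ part with the block $\phi_{7,i}^+$ of Figure \ref{fig-embeddings} at a degree-$6$ vertex $w'$ (an original endpoint of the subdivided edge). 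The key subtlety is the degree matching: the ``bad'' vertex of $\phi_{7,i}^+$ is the degree-$2$ subdivision vertex $x$, so we cannot sum there against a degree-$6$ vertex. However, one of the two edges witnessing the double incidence at $x$ is the edge $w'x$ incident to $w'$, so $\phi_{7,i}^+$ is also nearly face-simple except at $w'$; this lets us sum at $w'$ instead. Lemma \ref{lem-technical} then applies with $K_{6,n-1}$ as the face-simple side (its vertex $w$ has an independent neighborhood) and $\phi_{7,i}^+$ as the nearly-face-simple side, so $\Phi_i$ is quadrangular and face-simple, and it is nonorientable because $\phi_{7,i}^+$ is. Tracing the identifications shows that $\Phi_i$ consists of a complete bipartite $K_{6,n-1}$ together with a copy of $K_5$ minus $i$ edges placed on five of the six small-part vertices; the sixth small vertex, which is $x$, becomes $u$ with degree exactly $n-1$ and an independent neighborhood, and the $i$ deleted edges are chosen to avoid one distinguished vertex of the $K_5$.

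Next I would form the diamond sum of $\Phi_i$ at $u$ with $\psi$ at $v$, both of degree $n-1$. Now $\Phi_i$ is the face-simple side of minimum degree at least $3$ whose vertex $u$ has an independent neighborhood, and $\psi$ is the nearly-face-simple side, so Lemma \ref{lem-technical} again yields a quadrangular, face-simple embedding, which is nonorientable since $\Phi_i$ is. A direct count of the vertices and edges deleted and identified in the two excisions shows the result is an $(n+4,t+i)$-quadrangulation: the $n-1$ neighbors of $v$ merge with the independent neighborhood of $u$, and the missing edges tally to the $t$ non-edges of $G$ away from $v$ plus the $i$ non-edges of the $K_5$. Finally, the distinguished $K_5$-vertex left untouched by the deletions is adjacent to all $n+3$ remaining vertices, supplying the dominating vertex needed to iterate the lemma.

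The main obstacle is verifying that $\Phi_i$ has exactly the advertised local structure after the first diamond sum, most delicately that $u$ emerges with degree precisely $n-1$ and an independent neighborhood, and that the $i$ missing edges can be positioned inside the $K_5$ so as to realize each target value $i \in \{0,2,4\}$ while leaving one vertex dominating. The degree and near-face-simplicity bookkeeping at each sum, ensuring that the hypotheses of Lemma \ref{lem-technical} are genuinely met, is the crux; once that is in place, the vertex and edge counts and the nonorientability follow routinely.
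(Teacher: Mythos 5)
Your proof is correct and follows essentially the same route as the paper: a first diamond sum of $\phi_{7,i}^+$ (at a degree-6 endpoint of the subdivided edge, the vertex the paper calls $x$) with Ringel's orientable $K_{6,n-1}$ to obtain the face-simple intermediate embedding of $((K_1+H_i)\cup K_1)+\overline{K_{n-1}}$, followed by a second diamond sum with $\psi$ at the degree-$(n-1)$ subdivision vertex and $v$, invoking Lemma \ref{lem-technical} both times and tracking the dominating vertex of the $K_5-i$ block. The only nitpick is that your intermediate embedding $\Phi_i$ does not actually contain a dominating vertex (the distinguished $K_5$-vertex is not adjacent to $u$), but since $u$ is excised in the second sum this does not affect your final, correct verification.
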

\begin{proof}
Consider any $\phi_{7, i}^+$ in Figure \ref{fig-embeddings}. Using edge-complements, graph joins, and disjoint unions, the embedded graph can be expressed as $\overline{K_2} + ((K_1 + H_i) \cup K_1)$, where $H_i$ is a graph formed by taking the complete graph $K_4$ and deleting $i$ edges. The vertices of $K_1+H_i$ are the numbered vertices (in particular, vertex 0 is the $K_1$), the vertices of $\overline{K_2}$ are $x$ and $y$, and the remaining $K_1$ is $z$. The missing edges in $H_2$ are $(1,3)$ and $(2,3)$, and the missing edges in $H_4$ are $(1, 2)$, $(1, 3)$, $(2, 3)$, and $(3,4)$. 

Let $\phi'$ be an orientable quadrangular embedding of $K_{6,n-1}$, and let $u$ be any vertex of degree 6. Take any diamond sum of $\phi_{7,i}^+$ and $\phi'$ at $x$ and $u$. The resulting embedding is of the graph $((K_1 + H_i) \cup K_1) + \overline{K_{n-1}}$, which is face-simple by Lemma \ref{lem-technical}. A diamond sum of this embedding and $\psi$ at $z$ and $v$ results in a quadrangular embedding of $(K_1 + H_i)+(G-v)$. In total, there are $(1+4)+(n-1)=n+4$ vertices and $i+t$ missing edges. The final embedding is face-simple (once again, by Lemma \ref{lem-technical}) and nonorientable (since $\phi_{7,i}^+$ is nonorientable), and vertex $0$ is adjacent to every other vertex.
\end{proof}

At this point, we are ready to prove a strengthening of Theorem \ref{thm-nonorient}:

\begin{theorem}
For all integers $n$ and $t$ such that $n \geq 6$, $0 \leq t \leq n-4$, and $t \equiv \frac{1}{2}n(n-5) \pmod{2}$, there exists a face-simple nonorientable $(n, t)$-quadrangulation which has a vertex adjacent to every other vertex.
\label{thm-main}
\end{theorem}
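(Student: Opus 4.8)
The plan is to prove the theorem by induction on $n$ in steps of four, with Lemma \ref{lem-induct} furnishing the inductive step for every $n \ge 8$. The step size is dictated by the lemma, and everything is compatible with it: the parity requirement $t \equiv \frac{1}{2}n(n-5) \pmod 2$ depends only on $n \bmod 4$, while the lemma changes $t$ by an even amount, so each residue class of $n$ modulo $4$ forms an independent chain $n_0, n_0+4, n_0+8, \dots$ along which the parity of $t$ never changes. Crucially, the embeddings the lemma produces are face-simple and have a vertex adjacent to all others, hence in particular are nearly face-simple except at that vertex; so every embedding we build is immediately eligible to be fed back in as the input $\psi$ at the next stage. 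This is the self-sustaining feature the induction requires.

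Next I would handle the bookkeeping that guarantees the step reaches every admissible pair. Write $T_n$ for the set of admissible $t$, i.e.\ the integers with $0 \le t \le n-4$ and $t \equiv \frac{1}{2}n(n-5) \pmod 2$. A direct check gives $\min T_n = \min T_{n-4}$ (both equal the common parity bit, $0$ or $1$) and $\max T_n = \max T_{n-4} + 4$; the latter holds because the largest admissible value is $n-4$ for $n \equiv 0,3$ and $n-5$ for $n \equiv 1,2 \pmod 4$, and in every case this quantity decreases by exactly $4$ when $n$ does. Therefore, as $t'$ runs over $T_{n-4}$ and $i$ over $\{0,2,4\}$, the sums $t'+i$ run over exactly $\{\min T_{n-4}, \min T_{n-4}+2, \dots, \max T_{n-4}+4\} = T_n$. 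Thus, applying Lemma \ref{lem-induct} to the embeddings already constructed for row $n-4$ yields, for every $t \in T_n$, a face-simple nonorientable $(n,t)$-quadrangulation with a universal vertex, which is precisely the inductive claim for row $n$.

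It remains to anchor the four chains with explicit small embeddings. The chains $n \equiv 0,1 \pmod 4$ can be seeded at $n = 4,5$ by the $(4,0)$- and $(5,0)$-quadrangulations of Figure \ref{fig-embeddings}: their graphs are complete, so any chosen vertex $x$ is universal, and the figure certifies that each is at least nearly face-simple except at $x$; the tiling computation above then generates rows $8$ and $9$ and hence the rest of these two chains. The chains $n \equiv 2,3 \pmod 4$ begin at $n = 6$ and $n = 7$, which can never be outputs of Lemma \ref{lem-induct} (that would require a seed on $2$ or $3$ vertices); for these I would instead read off the face-simple embeddings $\phi_{6,1}$, $\phi_{7,1}$, and $\phi_{7,3}$ directly from Figure \ref{fig-embeddings}, verify each has a universal vertex, and note that they cover $T_6 = \{1\}$ and $T_7 = \{1,3\}$. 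With the tiling argument, this closes the induction.

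Since Lemma \ref{lem-induct} already carries the genuine topological content, I expect the main obstacle to be not a single deep step but the careful verification of the base embeddings: one must confirm from the polygonal unfoldings of Figure \ref{fig-embeddings} that each is quadrangular, simple, and nonorientable (exhibiting a M\"obius subpath), that $\phi_{6,1},\phi_{7,1},\phi_{7,3}$ are fully face-simple with a universal vertex, and that the seeds $\phi_{4,0},\phi_{5,0}$ are at least nearly face-simple except at a universal vertex. The other point demanding care is the range arithmetic at the top of each row, where $t = \max T_n$ forces $i = 4$ and $t' = \max T_{n-4}$; one must check that this $t'$ still lies in $[0,\,n-8]$, which is exactly what the identity $\max T_n = \max T_{n-4}+4$ secures. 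Neither difficulty is deep, but getting the boundary cases and the figure checks exactly right is where the real work lies.
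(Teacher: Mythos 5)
Your proposal is correct and follows essentially the same route as the paper: induction in steps of four driven by Lemma \ref{lem-induct}, seeded at $n=4,5,6,7$ by the small embeddings of Figure \ref{fig-embeddings}, with the same range/parity bookkeeping (the paper just picks $i$ explicitly as a function of $t$ rather than phrasing it as your covering of $T_n$ by $T_{n-4}+\{0,2,4\}$). The only cosmetic difference is that the paper obtains the $(7,1)$- and $(7,3)$-base cases by deleting the degree-2 vertex from $\phi_{7,0}^+$ and $\phi_{7,2}^+$ rather than reading them directly off the figure.
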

\begin{proof}
We induct on $n$, but to avoid listing out too many specific embeddings, we include $\phi_{4,0}$ and $\phi_{5,0}^*$ from Figure \ref{fig-embeddings} as base cases. Even though they are either orientable (in the case of $\phi_{5,0}^*$) or not face-simple (in the case of $\phi_{4,0}$), they still satisfy the conditions of Lemma \ref{lem-induct}. The remaining base cases are $\phi_{6,1}$, and the $(7,3)$- and $(7,1)$-quadrangulations resulting from deleting the vertex of degree 2 from $\phi_{7,2}^+$ and $\phi_{7,0}^+$, respectively.

For any integer $n \geq 8$, the permissible values of $t$ for both $n$ and $n-4$ are of the same parity. Let $n \geq 8$ and $t \geq 0$ satisfy the above hypotheses. We apply Lemma \ref{lem-induct} on an $(n-4, t-i)$-quadrangulation, where 
$$i = \begin{cases} 0 & \text{if~} t = 0,1 \\ 2 & \text{if~} t = 2,3 \\ 4& \text{if~} t \geq 4. \end{cases}$$
One can check that in all cases, $t-i \leq (n-4)-4$, so the desired quadrangulation exists by induction.
\end{proof}

\section{Orientable surfaces}

We give a new proof of Theorem \ref{thm-orient} along the same lines as our previous construction for the nonorientable case. By Proposition \ref{prop-deg3}, there is no need to worry about face-simplicity here. The inductive construction requires more base cases than before, but once again, we are able to reduce the set of \emph{ad hoc} embeddings by showing that some of these base cases can themselves be built from smaller embeddings. First, consider the embedding $\phi_{11,8}^{+*}$ in Figure \ref{fig-orient}. It is missing the edges of two 4-cycles $(1 \,\, 2 \,\, 3 \,\, 4)$ and $(5 \,\, 6 \,\, 7 \,\, 8)$. These edges can be incorporated into the embedding by applying the handle augmentation described in Figure \ref{fig-handle} to the pairs of faces labeled $\alpha$ and $\beta$ in Figure \ref{fig-orient}, resulting in the embeddings $\phi_{11,4}^{+*}$ and $\phi_{11,0}^{+*}$. Like before, these embeddings drive the inductive step of the proof:

\begin{figure}[t]
\centering
\includegraphics[scale=1]{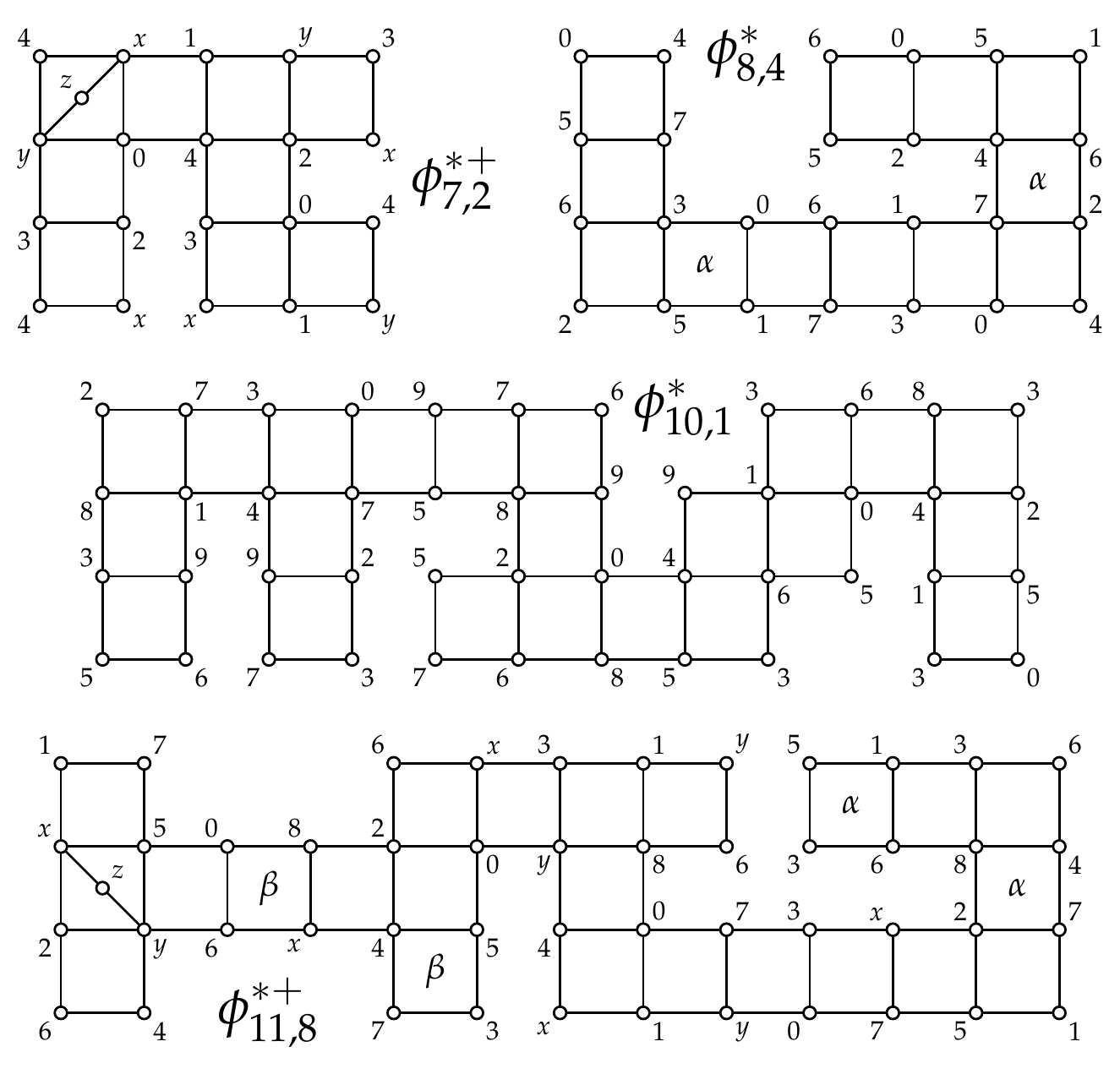}
\caption{More embeddings for the orientable analogue.}
\label{fig-orient}
\end{figure}

\begin{figure}[t]
\centering
\includegraphics[scale=1]{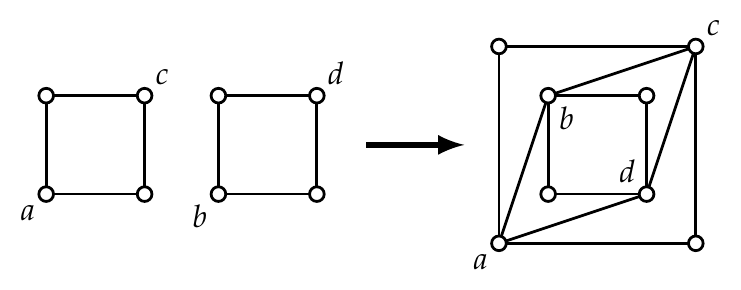}
\caption{Adding the edges of the 4-cycle $(a \,\, b \,\, c \,\, d)$.}
\label{fig-handle}
\end{figure}

\begin{lemma}
Let $\psi$ be an orientable $(n,t)$-quadrangulation with a vertex adjacent to every other vertex.  Then, for each $i \in \{0, 4, 8\}$, there exists an orientable $(n+8, t+i)$-quadrangulation which has a vertex adjacent to every other vertex. 
\label{lem-orientinduct}
\end{lemma}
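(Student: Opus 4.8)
The plan is to mirror the proof of Lemma~\ref{lem-induct} almost verbatim, replacing the nonorientable ingredient $\phi_{7,i}^+$ with the orientable embeddings $\phi_{11,i}^{+*}$ (for $i \in \{0,4,8\}$) introduced just above, and using an orientable bipartite embedding as the other building block. Since we are in the orientable setting, Proposition~\ref{prop-deg3} will handle face-simplicity for any diamond sum of minimum degree at least $3$, so the bookkeeping reduces to tracking vertex counts, edge counts (equivalently, missing edges $t$), orientability, and the existence of a dominating vertex.

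First I would describe the combinatorial structure of each $\phi_{11,i}^{+*}$ as a graph join, analogous to the expression $\overline{K_2} + ((K_1 + H_i)\cup K_1)$ in Lemma~\ref{lem-induct}. The two missing $4$-cycles $(1\,2\,3\,4)$ and $(5\,6\,7\,8)$ mean that for $i=8$ the relevant subgraph on vertices $1,\dots,8$ is a complete graph minus these two $4$-cycles, while the handle augmentations of Figure~\ref{fig-handle} restore four edges at a time to produce $\phi_{11,4}^{+*}$ and $\phi_{11,0}^{+*}$; I would record the join decomposition so that one distinguished vertex (the subdivided-edge vertex, call it $x$) and one ``free'' vertex (call it $z$) are available for the two diamond sums, just as $x$ and $z$ play these roles in Lemma~\ref{lem-induct}. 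Next I would perform a diamond sum of $\phi_{11,i}^{+*}$ with an orientable quadrangular embedding of $K_{m,n'}$ at $x$ and a suitable degree-matched vertex $u$; here I must choose $m$ so that Ringel's embedding of $K_{m,n'}$ with $m \equiv 2 \pmod 4$ exists, is orientable, and has the correct degree to match $x$, thereby attaching $n'$ new independent vertices while keeping the embedding orientable and (by Proposition~\ref{prop-deg3}) face-simple. A second diamond sum with $\psi$ at $z$ and the dominating vertex $v$ of $\psi$ then glues on $G-v$, contributing the remaining vertices.

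The arithmetic is the crux of the orientable case: whereas the nonorientable construction added $4$ vertices per step, here each step must add exactly $8$ vertices (hence the $n+8$) and increase the missing-edge count by $i \in \{0,4,8\}$ (in steps of $4$ rather than $2$), which is forced by the orientable congruence $t \equiv \tfrac12 n(n-5) \pmod 4$. I would verify that the degrees of the two vertices being summed genuinely agree at each diamond sum, that the total vertex count is $(8) + n' + (n-1) = n+8$ for the appropriate $n'$, and that exactly $i + t$ edges are missing, with the $+i$ coming entirely from the augmentation state of $\phi_{11,i}^{+*}$ and the $+t$ inherited from $\psi$. I would also confirm that a dominating vertex survives: one of the numbered vertices of $\phi_{11,i}^{+*}$ that is adjacent to all its neighbors in the join and is untouched by both diamond sums will remain adjacent to every other vertex of the final graph.

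The main obstacle I expect is the degree-matching and the choice of the intermediate complete bipartite embedding: the degree of $x$ in $\phi_{11,i}^{+*}$ (after subdivision) and the degree of $z$ must each equal the degree of the vertex they are summed against, and these constraints must be compatible simultaneously with $m \equiv 2 \pmod 4$ for orientability of $K_{m,n'}$ and with the requirement that $\psi$ have a dominating vertex of the right degree. Getting all these parities and degree equalities to line up — while ensuring minimum degree at least $3$ so that Proposition~\ref{prop-deg3} applies — is the delicate part; once it is arranged, face-simplicity and orientability follow formally, and the vertex/edge counts are a routine check.
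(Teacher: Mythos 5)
Your proposal is correct and follows essentially the same route as the paper, which simply takes diamond sums of $\phi_{11,i}^{+*}$ (decomposed as $\overline{K_2} + ((K_1 + J_i) \cup K_1)$ with $J_i = K_8$ minus $i$ edges), an orientable embedding of $K_{10,n-1}$ (your $m$ is forced to be $\deg x = 10 \equiv 2 \pmod 4$), and $\psi$, exactly mirroring Lemma \ref{lem-induct}. The degree-matching you flag as delicate resolves just as in the nonorientable case ($\deg z = n-1 = \deg v$ after the first sum), and face-simplicity is not even required in the statement, so Proposition \ref{prop-deg3} is not needed here.
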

\begin{proof}
Each $\phi_{11,i}^{+*}$ is isomorphic to $\overline{K_2} + ((K_1 + J_i) \cup K_1)$, where $J_i$ is a graph formed by taking $K_8$ and deleting $i$ edges. Similar to how we proved Lemma \ref{lem-induct}, we take diamond sums of the embeddings $\phi_{11,i}^{+*}$, $K_{10,n-1}$, and $\psi$.
\end{proof}

We also have an orientable embedding $\phi_{7,2}^{+*}$ in Figure \ref{fig-orient}, which yields a partial analogue of Lemma \ref{lem-induct}:

\begin{proposition}
Let $\psi$ be an orientable $(n,t)$-quadrangulation with a vertex adjacent to every other vertex.  Then, there exists an orientable $(n+4, t+2)$-quadrangulation which has a vertex adjacent to every other vertex. 
\label{prop-intermediate}
\end{proposition}

While this will not be used for the inductive step, it helps in finding some of the other base cases. We use it in proving a marginally stronger version of Theorem \ref{thm-orient}:

\begin{theorem}
For all integers $n$ and $t$ such that $n \geq 5$, $0 \leq t \leq n-4$, and $t \equiv \frac{1}{2}n(n-5) \pmod{4}$, there exists an orientable $(n, t)$-quadrangulation which has a vertex adjacent to every other vertex.
\label{thm-mainorient}
\end{theorem}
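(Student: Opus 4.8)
The plan is to mirror the proof of Theorem~\ref{thm-main}, running a strong induction on $n$ whose inductive step is the step-$8$ operation of Lemma~\ref{lem-orientinduct}, and using Proposition~\ref{prop-intermediate} only to manufacture base cases. The modular arithmetic cooperates: since $\tfrac12 n(n-5) \bmod 4$ is periodic in $n$ with period $8$ (indeed $\tfrac12(n+8)(n+3) - \tfrac12 n(n-5) = 8n+12 \equiv 0 \pmod 4$), the admissible residue of $t$ is the same at $n$ and at $n-8$, and because each $i \in \{0,4,8\}$ is divisible by $4$, passing from $(n-8,t-i)$ to $(n,t)$ via Lemma~\ref{lem-orientinduct} keeps $t$ in the correct class. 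Both Lemma~\ref{lem-orientinduct} and Proposition~\ref{prop-intermediate} take an embedding with a dominating vertex to one with a dominating vertex, so that property propagates automatically and the only real content of the inductive step is a range check.

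For the inductive step I would take $n \ge 13$ and, given a valid $(n,t)$, choose
$$i = \begin{cases} 0 & \text{if } t \le 3,\\ 4 & \text{if } 4 \le t \le 7,\\ 8 & \text{if } t \ge 8,\end{cases}$$
and apply Lemma~\ref{lem-orientinduct} to an $(n-8,t-i)$-quadrangulation, which is a valid target at $n-8$ whenever $0 \le t-i \le n-12$. The point is that the three reachable windows $[0,n-12]$, $[4,n-8]$, $[8,n-4]$ already cover all of $[0,n-4]$ once $n \ge 15$, so some $i$ works; for $n=13$ the admissible $t \in \{0,4,8\}$ are checked by hand, each reducing to $(5,0)$. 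The one place this breaks is $n-8=6$: there is no valid pair at $n=6$, since the forced residue $t \equiv 3 \pmod 4$ never meets $0 \le t \le 2$. Hence $n=14$ cannot be reduced this way and must be treated separately, while for every other $n \ge 13$ we have $n-8 \ge 5$ and $n-8 \ne 6$, so the reduction is legitimate and strong induction applies.

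This leaves the base cases $5 \le n \le 12$ together with $n=14$, i.e.\ the pairs $(5,0)$, $(7,3)$, $(8,0)$, $(8,4)$, $(9,2)$, $(10,1)$, $(10,5)$, $(11,1)$, $(11,5)$, $(12,2)$, $(12,6)$, $(14,3)$, $(14,7)$. I would supply a small number of these from the explicit embeddings of Figure~\ref{fig-orient}: $\phi_{5,0}^*$ gives $(5,0)$, and deleting the degree-$2$ vertex from $\phi_{7,2}^{+*}$, $\phi_{11,0}^{+*}$, and $\phi_{11,4}^{+*}$ gives $(7,3)$, $(11,1)$, and $(11,5)$, while the remaining ad hoc pairs $(8,0)$, $(8,4)$, $(10,1)$, $(10,5)$ come from further embeddings in the same figure. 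Every other base pair is then obtained by a single application of Proposition~\ref{prop-intermediate}, which sends $(n,t)$ to $(n+4,t+2)$: namely $(9,2)$ from $(5,0)$, $(12,2)$ from $(8,0)$, $(12,6)$ from $(8,4)$, and $(14,3)$, $(14,7)$ from $(10,1)$, $(10,5)$. This is exactly the role flagged for Proposition~\ref{prop-intermediate} and keeps the number of hand-built embeddings small.

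The main obstacle, as in the nonorientable argument, is the bookkeeping at the boundary rather than any deep new construction. One must verify that the embeddings drawn in Figure~\ref{fig-orient} really are orientable quadrangulations with a dominating vertex, confirm that the degree-$2$ deletions produce the claimed $(11,1)$ and $(11,5)$, and—most delicately—handle the empty slot at $n=6$, which ejects $n=14$ from the inductive step and onto Proposition~\ref{prop-intermediate}. The range check for small $n$ (here $n=13$, and the overlap threshold $n \ge 15$) also deserves care because the admissible values of $t$ are sparse there; once $n$ is large enough that the three windows overlap, the step-$8$ recursion runs without further incident.
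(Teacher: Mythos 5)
Your proposal is correct and follows essentially the same route as the paper: the step-$8$ induction via Lemma~\ref{lem-orientinduct} with the same choice of $i$, the same base-case range forced by the empty slot at $n=6$, and Proposition~\ref{prop-intermediate} used in the same way to fill in $(9,2)$, $(12,2)$, $(12,6)$, and the $n=14$ cases. The only divergence is the $(10,5)$-quadrangulation, which you assume is an ad hoc embedding in the figure, whereas the paper derives it by adding a degree-$2$ vertex inside a face of $\phi_{5,0}^*$ to get a non-minimal $(6,3)$-quadrangulation and then applying Proposition~\ref{prop-intermediate}.
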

\begin{proof}
We first establish the base cases, the $(n,t)$-quadrangulations for $n = 5, \dotsc, 14$, after which Lemma \ref{lem-orientinduct} can be applied repeatedly to obtain all higher-order minimal quadrangulations. For $n = 5$, we reuse $\phi_{5,0}^*$ from Figure \ref{fig-embeddings}. For $n=6$, there are no permissible values of $t$. For $n = 7, 11$, we repurpose the embeddings $\phi_{7,2}^{*+}$, $\phi_{11,0}^{*+}$, and $\phi_{11,4}^{*+}$ by deleting their degree 2 vertices. For $n = 8$, the handle augmentation in Figure \ref{fig-handle} can also be used on $\phi_{8,4}^*$ to obtain an $(8,0)$-quadrangulation. For $n = 10$, $\phi_{10,1}^*$ in Figure \ref{fig-orient} is a $(10,1)$-quadrangulation. To construct a $(10,5)$-quadrangulation, we add a degree 2 vertex inside one of the faces of $\phi_{5,0}^*$ to obtain a (non-minimal) $(6,3)$-quadrangulation, and then apply Proposition \ref{prop-intermediate}. The remaining cases $n = 9, 12, 13, 14$ are achieved by applying either by Lemma \ref{lem-orientinduct} or Proposition \ref{prop-intermediate} to some of the aforementioned embeddings. All of these embeddings have a vertex adjacent to every other vertex. 

For all $n \geq 15$, we apply Lemma \ref{lem-orientinduct} on an $(n-8, t-i)$-quadrangulation, where 
$$i = \begin{cases} 0 & \text{if~} t = 0,1,2,3 \\ 4 & \text{if~} t = 4,5,6,7 \\ 8 & \text{if~} t \geq 8. \end{cases}$$
\end{proof}

\bibliographystyle{alpha}
\bibliography{biblio}

\end{document}